\documentclass[reqno,a4paper]{amsart}

\usepackage{latexsym,xspace,enumerate,amsfonts,amsmath,amssymb,amscd}
\usepackage{xypic,color,tikz,hyperref,syntonly,MnSymbol,shadow,slashed,paralist,stmaryrd}
\usepackage[capitalize]{cleveref}
\usepackage[arrow, matrix, curve]{xy}

\newcommand{\Z}{\mathbb{Z}}

\newcommand{\R}{\mathbb{R}}

\newcommand{\GL}{\mr{GL}}

\newcommand{\SO}{\mr{SO}}

\newcommand{\SU}{\mr{SU}}
\newcommand{\Sp}{\mr{Sp}}
\newcommand{\Gt}{\mr{G_2}}
\newcommand{\Spin}{\mr{Spin}}

\newcommand{\Hol}{\mr{Hol}}

\newcommand{\mr}{\mathrm}

\newcommand{\mc}{\mathcal}

\renewcommand{\ker}{\mathop{\rm ker}\nolimits}

\newcommand{\Tr}{\mathop{\rm Tr}\nolimits}

\newcommand{\Diff}{\mbox{\sl Diff}}

\newcommand{\Ric}{\mathop{\rm Ric}\nolimits}

\newcommand{\tr}{\mbox{\rm tr}}

\newcommand{\note}[1]{\marginpar{\raggedright\if@twoside\ifodd\c@page\raggedleft\fi\fi\sf\scriptsize \red{RMK: #1}}}

\newcommand\red[1]{\textcolor{red}{#1}}

\newcommand{\be}{\begin{equation}}
\newcommand{\ben}{\begin{equation}\nonumber}
\newcommand{\ee}{\end{equation}}
\newcommand{\bp}{\begin{para}}
\newcommand{\ep}{\end{para}}
\newcommand{\bps}{\begin{paras}}
\newcommand{\eps}{\end{paras}}
\newcommand{\benum}{\begin{enumerate}[(i)]}
\newcommand{\eenum}{\end{enumerate}}

\def\Mpara{\mathcal{M}_{\para}}

\newtheorem{proposition}{\textbf{Proposition}}

\newtheorem*{lemma*}{\textbf{Lemma}}
\newtheorem{corollary}[proposition]{\textbf{Corollary}}
\newtheorem{theorem}[proposition]{\textbf{Theorem}}
\newtheorem*{theorem*}{\textbf{Theorem}}

\theoremstyle{definition}

\newtheorem*{example*}{\textbf{Example}}
\newtheorem*{remark}{\textbf{Remark}}

\newcounter{para}[section]
\newenvironment{para}[2][]{\refstepcounter{para}\noindent\ignorespaces{\bf #1\thepara. #2.} \rmfamily}{\noindent\ignorespacesafterend\bigskip}
\newenvironment{paras}[1]{\noindent\ignorespaces{\bf #1.} \rmfamily}{\noindent\ignorespacesafterend\bigskip}

\let\ol\overline
\let\witi\widetilde
\let\para\|

\parindent=0cm
%
%
%
\begin{document}     
\title{Holonomy rigidity for Ricci-flat metrics}
 
\author{Bernd Ammann} 
\address{Fakult\"at f\"ur Mathematik \\
Universit\"at Regensburg \\ Universit\"atsstra{\ss}e 40 \\
D--93040 Regensburg \\  
Germany}
\email{bernd.ammann@mathematik.uni-regensburg.de}

\author{Klaus Kr\"oncke} 
\address{Fachbereich Mathematik der Universit\"at Hamburg \\
Bundesstr. 55 \\ D--20146 Hamburg \\ Germany}
\email{klaus.kroencke@uni-hamburg.de}

\author{Hartmut Wei\ss{}} 
\address{Mathematisches Seminar der Universit\"at Kiel\\ Ludewig-Meyn Stra{\ss}e 4\\ D--24098 Kiel\\ Germany}
\email{weiss@math.uni-kiel.de}

\author{Frederik Witt} 
\address{Institut f\"ur Geometrie und Topologie der Universit\"at Stuttgart\\ Pfaffenwaldring 57\\ D--70569 Stuttgart\\ Germany}
\email{frederik.witt@mathematik.uni-stuttgart.de}

\begin{abstract}
On a closed connected oriented manifold $M$ 
we study the space $\mathcal{M}_\|(M)$ of all Riemannian 
metrics which admit a non-zero parallel spinor on the universal covering. 
Such metrics are Ricci-flat, and all known Ricci-flat metrics are of this form.
We show the following: The space $\mathcal{M}_\|(M)$ is a smooth submanifold 
of the space of all metrics, 
and its premoduli space is a smooth finite-dimensional manifold.
The holonomy group is locally constant on $\mathcal{M}_\|(M)$. 
If $M$ is spin,
then the dimension of the space of parallel spinors is a locally constant function on~$\mathcal{M}_\|(M)$. 
\end{abstract}


\date{\today}


\maketitle
%

\section{Overview over the results}\label{intro}

Let $M$ be a compact connected oriented manifold without boundary, and let 
$\pi:\witi M\to M$ be its universal covering. We assume throughout the article
that $\witi M$ is spin. We define $\Mpara(M)$ to
be the space of all Riemannian metrics on $M$, such that $(\witi M,\tilde g)$, $\tilde g:=\pi^* g$, 
carries a (non-zero) parallel spinor. This implies that $g$ is a stable 
Ricci-flat metric on $M$, see \cite{dww05}. It is still an open question, 
whether  $\Mpara(M)$ contains all Ricci-flat metrics on~$M$.

In the past the space of such metrics was studied in much detail in the 
irreducible and simply-connected case, see e.g.\ \cite{wa89}, \cite{wa91}. Much less is known already in the non-simply-connected irreducible case, see e.g.\ \cite{wa95}. However, the general reducible case has virtually not been addressed at all. For example there is no classification of the full holonomy groups for metrics in $\Mpara(M)$ if the restricted holonomy is reducible.

The second named author recently found an efficient method to describe 
deformations of products of stable Ricci-flat manifolds \cite{kr15}.
In the present article we are using this method to see that the moduli space 
$\Mpara(M)$ is well-behaved. 

In fact we show that $\Mpara(M)$ is a smooth submanifold of the space of all metrics, and
its premoduli space is a smooth finite-dimensional manifold, 
see Corollary~\ref{cor.smooth}. If we view the dimension of the space of 
parallel spinors as a function $\Mpara(M)\to \mathbb{N}_0$, then this 
function is locally constant, see Corollary~\ref{dim.par.con}. Similarly, the full holonomy 
group, viewed as a function from $\Mpara(M)$ to conjugacy classes of subgroups
in $\GL(n,\R)$ is locally constant, see Theorem~\ref{theo.rig}.

\section{Ricci-flat metrics}\label{ric.fla.met}
{\bf Conventions.} All our manifolds will be connected and of dimension $n\geq3$ unless stated otherwise. A {\em spin} manifold is a manifold together with a fixed spin structure.

\bigskip

A Riemannian manifold $(M,g)$ is {\em Ricci-flat} if its Ricci tensor $\Ric^g$ vanishes identically. There are various reasons to study this class of metrics. 

It is already a challenging problem to decide whether a compact manifold admits
a Ricci-flat metric. Compact manifolds with positive Ricci 
curvature must have finite fundamental group due to the Bonnet-Myers theorem.
A similar obstruction is also available for compact manifolds with nonegative 
Ricci curvature: as a consequence of the splitting 
theorem by Cheeger and Gromoll \cite{chgr71}, one obtains the following theorem, see also 
Fischer and Wolf \cite{fiwo75}.

\begin{theorem*}[Structure theorem for Ricci-flat manifolds]
Let $(M,g)$ be a compact Ricci-flat manifold. Then there exists a finite normal Riemannian covering $\ol M \times T^q \to M$ with $(\ol M,\bar g)$ a compact simply-connected Ricci-flat manifold and $(T^q,g_{fl})$ a flat torus.
\end{theorem*}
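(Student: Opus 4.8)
The plan is to derive this as a consequence of the Cheeger--Gromoll splitting theorem, essentially following \cite{chgr71} and \cite{fiwo75}. Since $M$ is compact and Ricci-flat, the universal covering $\witi M$ is complete and Ricci-flat, and the deck group $\Gamma=\pi_1(M)$ acts on it by isometries, freely, properly discontinuously and cocompactly. As long as $\witi M$ contains a line, the splitting theorem splits off an isometric $\R$-factor; as each step lowers the dimension, after finitely many steps we reach an isometry $\witi M\cong\R^k\times\ol M$ with $\ol M$ complete, Ricci-flat, simply connected and containing no line, and $k$ the dimension of the Euclidean de Rham factor of $\witi M$. As $\ol M$ has no Euclidean de Rham factor, the isometry group splits, $\mr{Isom}(\witi M)=\mr{Isom}(\R^k)\times\mr{Isom}(\ol M)$, so every $\gamma\in\Gamma$ has a block form $\gamma=(\gamma_1,\gamma_2)$; write $\rho_1,\rho_2$ for the induced homomorphisms from $\Gamma$ into $\mr{Isom}(\R^k)$ and $\mr{Isom}(\ol M)$.

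The next step is to show that $\ol M$ is compact. Choosing a compact $D\subset\witi M$ with $\Gamma\cdot D=\witi M$ and projecting to the $\ol M$-factor gives $\ol M=\rho_2(\Gamma)\cdot K$, where $K$ is the (compact) image of $D$; hence the closed subgroup $H:=\ol{\rho_2(\Gamma)}\subset\mr{Isom}(\ol M)$ satisfies $\ol M=H\cdot K$. If $H$ were noncompact then, since $\mr{Isom}(\ol M)$ acts properly, there would be $h_i\in H$ with $d(p,h_ip)\to\infty$ for some fixed $p\in K$; translating the midpoints of minimizing geodesics from $p$ to $h_ip$ back into $K$ by elements of $H$ and extracting a convergent subsequence (using completeness of $\ol M$) would yield a line in $\ol M$, which is excluded. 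Thus $H$ is compact and $\ol M=H\cdot K$ is compact. On a compact Ricci-flat manifold the Bochner technique forces every Killing field to be parallel, and a nonzero parallel vector field on the simply connected complete $\ol M$ would produce a Euclidean de Rham factor, again contradicting the absence of lines; hence $\mr{Isom}(\ol M)$ has trivial identity component and, being compact, is finite. Consequently $\rho_2(\Gamma)$ is finite, so $\Gamma':=\ker\rho_2$ is a finite-index normal subgroup of $\Gamma$ which acts trivially on $\ol M$ and freely, properly discontinuously on $\R^k$. The associated finite normal Riemannian covering of $M$ is $(\R^k/\Gamma')\times\ol M$, and its compactness forces $\R^k/\Gamma'$ to be a compact flat manifold.

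Finally, viewing $\Gamma'$ as a crystallographic subgroup of $\mr{Isom}(\R^k)$, Bieberbach's first theorem provides its translation subgroup $L$ of finite index, with $L\cong\Z^k$ and $\R^k/L\cong T^k$. Conjugation in $\Gamma$ acts in block form on $\witi M$ and carries translations of $\R^k$ to translations, so $L$ is normal in $\Gamma$; it has finite index and acts trivially on $\ol M$, whence $\witi M/L=(\R^k/L)\times\ol M=T^k\times\ol M\to M$ is a finite normal Riemannian covering, with $q=k$, $(\ol M,\bar g)$ compact, simply connected and Ricci-flat, and $(T^k,g_{fl})$ a flat torus. I expect the main obstacle to be the compactness of $\ol M$: one must balance the cocompactness of the $\Gamma$-action against the absence of lines via the limiting-geodesic argument, and one must justify the splitting $\mr{Isom}(\witi M)=\mr{Isom}(\R^k)\times\mr{Isom}(\ol M)$, which rests on the uniqueness of the Euclidean de Rham factor of $\witi M$.
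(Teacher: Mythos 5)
Your argument is correct and is precisely the derivation the paper has in mind: the paper states this result without proof as a consequence of the Cheeger--Gromoll splitting theorem, citing \cite{chgr71} and \cite{fiwo75}, and your chain of steps (iterated splitting, splitting of the isometry group via uniqueness of the Euclidean de Rham factor, compactness of $\ol M$ from cocompactness plus absence of lines, finiteness of $\mr{Isom}(\ol M)$ via Bochner, and Bieberbach to extract the normal lattice $L\cong\Z^q$) is exactly the standard proof from those references. No gaps.
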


The theorem implies that $\pi_1(M)$ contains a free abelian group $\Z^q$ of 
rank $q$ of finite index, and $q$ satisfies $b_1(M)\leq q\leq \dim M$, 
which acts by translations on $\R^q$ and trivially on $\ol M$.

A second reason for studying Ricci-flat metrics comes from the intimate relation with the concept of {\em holonomy}. To fix notation we briefly recall the definition of holonomy groups and some of their main properties. We refer to~\cite[Chapter 10]{be87} for further details. Let us fix a point $x\in M$ and an identification of $T_xM$ with $\R^n$. Up to conjugacy in $\GL(n,\R)$ these choices define a subgroup $\Hol(M,g)\subset\GL(n,\R)$ as the set of endomorphisms given by parallel transport around a loop in $x$, called the {\em (full) holonomy group} 
of $(M,g)$.
The subgroup obtained by taking loops homotopic to the constant path is called the {\em restricted holonomy group} and is denoted by $\Hol_0(M,g)$. Obviously, 
$\Hol_0(M,g)$ is a normal and connected subgroup of $\Hol(M,g)$, and we have 
$\Hol(\witi M,\tilde g)=\Hol_0(M,g)$. 
It is also known that $\Hol_0(M,g)$ is a 
closed Lie subgroup of $\SO(n)$, in contrast 
to $\Hol(M,g)$ which might be non-compact, see \cite{wilking:99}.
Parallel transport induces a group epimorphism $\pi_1(M)\to \Hol(M,g)/\Hol_0(M,g)$, and thus
$\Hol(M,g)/\Hol_0(M,g)$ is countable. As a consequence $\Hol_0(M,g)$ is 
the connected component of $\Hol(M,g)$ containing the identity, and 
the index of $\Hol_0(M,g)$ in $\Hol(M,g)$ is the number of connected components
of $\Hol(M,g)$.

For a locally irreducible, non-symmetric and Ricci-flat 
Riemannian manifold  $(M,g)$ of dimension~$n$, $\Hol_0(M,g)$ 
is conjugate to one of the groups occuring in Table~\ref{ric.fla.hol}. 
The possible full holonomy groups of such manifolds 
were classified by McInnes  \cite{mcinnes91}, see Wang \cite{wa95} 
for the spin case.
In the general Ricci-flat case, however, it is difficult to determine the 
full holonomy group. The full holonomy group of a non-irreducible Ricci-flat spin manifold does not have to be a product of holonomy groups in McInnes' list.
Still, it follows from the structure theorem for Ricci-flat manifolds,
that the subgroup $\Z^q\subset \pi_1(M)$ is in the kernel of the 
map $\pi_1(M)\to \Hol(M,g)/\Hol_0(M,g)$. Thus
$\Hol(M,g)/\Hol_0(M,g)$ is finite, see also~\cite[Theorem 6]{chgr71}.

A third reason comes from the connection with spin geometry (see for instance~\cite{fr00} for more background on spinors). A parallel spinor on any Riemannian spin manifold forces the underlying metric to be Ricci-flat with holonomy group strictly contained in $\SO(n)$. Conversely, as we have just recalled, a locally irreducible orientable Ricci-flat manifold is either of holonomy $\SO(n)$, or it has a simply-connected restricted holonomy group. In the latter case case the universal covering of $M$ is spin and there is a non-trivial parallel spinor on the universal covering. The dimension of the space of parallel spinors $\mc P(M,g)$ is determined by the holonomy group~\cite{wa89}, see Table~\ref{ric.fla.hol}. 
Furthermore, if the compact manifold $(M,g)$ carries 
a parallel spinor,
then it follows from Wang's work~\cite{wa91} that one obtains a parallel spinor
for any metric in the connected component of $g$ within 
the space of Ricci-flat metrics on~$M$.

In the whole article a spinor is a smooth section of the complex spinor bundle
and $\dim \mc P(M,g)$ is the complex dimension of the space of parallel spinors.

\begin{table}[b]
\begin{center}
\begin{tabular}{|l|c|}
\hline
$\Hol(M^n,g)$& $\dim\mc P(M,g)$\\ 
\hline
$\SO(n)$&--\\
$\SU(m),\,n=2m$&$2$\\
$\Sp(k),\,n=4k$&$k+1$\\
$\Spin(7),\,n=8$&$1$\\
$\Gt,\,n=7$&$1$\\
\hline
\end{tabular}
\end{center}
\caption{Special holonomy groups and the dimension of the space of 
parallel spinors.}
\label{ric.fla.hol}
\end{table}
%
Our main theorem studies the holonomy group of a metric with parallel spinor under Ricci-flat deformations.
\begin{theorem}[Rigidity of the holonomy group]\label{theo.rig}
Let $(M,g)$ be a compact Riemannian manifold whose universal covering
is spin and carries a parallel spinor. 
If~$g_t$, $t\in I:=[0,T]$ is a smooth family of Ricci-flat metrics such that $g_0=g$, then $\Hol(M,g_t)$ is conjugate to $\Hol(M,g)$ in $\GL(n,\R)$. 
\end{theorem}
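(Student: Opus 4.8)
The plan is to use connectedness of $I=[0,T]$ to reduce to showing that $t\mapsto\Hol(M,g_t)$ is locally constant up to conjugacy in $\GL(n,\R)$, and to establish this by controlling separately the restricted holonomy $\Hol_0(M,g_t)$ and the finite quotient $\Hol(M,g_t)/\Hol_0(M,g_t)$. The first thing to verify is that $g_t\in\Mpara(M)$ for every $t$. The set $\{t\in I:g_t\in\Mpara(M)\}$ is non-empty and open: openness is part of the deformation theory behind Corollary~\ref{cor.smooth} (built on \cite{kr15} and the unobstructedness of parallel-spinor deformations \cite{wa91}), which shows $\Mpara(M)$ to be open in the space of Ricci-flat metrics on $M$. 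It is also closed: after passing to the compact finite cover $\ol M\times T^q\to M$ of the structure theorem and fixing one of its finitely many spin structures, one normalises the parallel spinors of the pulled-back metrics and uses the Weitzenb\"ock identity $D^2=\nabla^*\nabla$ together with elliptic estimates to extract a $C^\infty$-limit, a non-zero parallel spinor for the limiting metric, which pulls back to $\witi M$. (Alternatively, \cite{wa91} transports the parallel spinor along the whole path directly.) Hence the entire family lies in $\Mpara(M)$.

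For the restricted holonomy, recall that for $g\in\Mpara(M)$ one has $\Hol_0(M,g)=\Hol(\witi M,\tilde g)$, which by de Rham and Berger is a product of groups from $\{\SU(m),\Sp(k),\Gt,\Spin(7)\}$ acting trivially on the flat de Rham factor, and which is exactly the common stabiliser in $\SO(n)$ of the parallel spinors on $\witi M$ --- equivalently of the parallel forms they determine (K\"ahler and complex volume forms, the $\Gt$- and $\Spin(7)$-forms). I would invoke the local description of $\Mpara(M)$ coming from Corollary~\ref{cor.smooth} and \cite{kr15}: near any of its points, every parallel-spinor metric arises by deforming the metrics on the irreducible de Rham factors of $\witi M$ within their own classes of special-holonomy metrics, together with a deformation of the flat factor and of the deck action. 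In such a parametrisation the de Rham splitting and the holonomy of each factor are unchanged; equivalently the defining forms vary smoothly and are \emph{stable} in the $\GL(n,\R)$-sense, so a small perturbation preserves their $\GL(n,\R)$-orbit and hence the conjugacy class of their stabiliser. Thus $[\Hol_0(M,g_t)]$ is locally constant.

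For the full holonomy, fix $x\in M$ and a lift $\tilde x\in\witi M$. Parallel transport identifies $\Hol(M,g_t)$ with the subgroup of $\O(n)$ generated by $\Hol_0(M,g_t)$ and the orthogonal maps $\tau_\gamma(t)$, $\gamma\in\pi_1(M)$, induced by the deck transformations; these normalise $\Hol_0(M,g_t)$, depend continuously on $t$, and are trivial on the finite-index subgroup $\Z^q\subset\pi_1(M)$ of the structure theorem, so $\Hol(M,g_t)/\Hol_0(M,g_t)$ is a quotient of $\pi_1(M)/\Z^q$. Since the family is connected and (by the same deformation theory) the de Rham data and the subgroup $\Z^q$ remain fixed along it, only finitely many holonomy types occur; combining this with the continuity of $t\mapsto\tau_\gamma(t)$ inside the compact group $N_{\O(n)}(\Hol_0(M,g_t))$, with a Montgomery--Zippin type argument (nearby compact subgroups are sub-conjugate to a given one), and with connectedness of $I$, one obtains that $\Hol(M,g_t)$ is conjugate to $\Hol(M,g_0)$ in $\GL(n,\R)$ for all $t$.

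The step I expect to be the main obstacle is the rigidity used in the last two paragraphs: that along $\Mpara(M)$ the de Rham type of $(\witi M,\tilde g)$, the dimension of its flat factor, and the discrete deck data $\pi_1(M)\to\Hol(M,g)/\Hol_0(M,g)$ do not jump. This is not a soft semicontinuity statement --- one has to rule out, for instance, a hyperk\"ahler factor degenerating to a Calabi--Yau one, or a flat direction being absorbed into a factor of special holonomy, along the family --- and it is exactly here that the product deformation theory of \cite{kr15} underlying Corollary~\ref{cor.smooth} carries the weight.
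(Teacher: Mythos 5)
Your proposal is correct and follows essentially the same route as the paper: first establish rigidity of $\Hol_0$ via the Cheeger--Gromoll/Fischer--Wolf structure theorem, the de Rham splitting into irreducible special-holonomy factors, Kr\"oncke's product deformation theorem and the Wang/Nordstr\"om rigidity of the irreducible factors (this is the paper's Proposition~\ref{rig.res}); then control the finite quotient $\Hol/\Hol_0$ by continuity of the deck parallel transports through $\pi_1(M)/\Z^q$, Montgomery--Zippin, and semicontinuity of the number of components. You also correctly locate the real weight of the argument in the product deformation theory of \cite{kr15}, which is exactly where the paper places it.
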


\section{Rigidity of products}\label{sec.prod}

An important ingredient in the proof of Theorem~\ref{theo.rig}
is the following product formula by Kr\"oncke \cite{kr15}. It essentially
tells us that product metrics $g\times h$ on 
$N\times P$, with $g\in \Mpara(N)$, $h\in  \Mpara(P)$ and $N$, $P$ compact,  
are rigid in the sense that deformations within 
$\Mpara(N\times P)$ are again product metrics.
This cannot hold in the strict sense, as flat tori and 
pull-backs of $g\times h$ by non-product diffeomorphisms provide 
counterexamples to rigidity in the strict sense. However, rigidity 
holds modulo diffeomorphisms, provided that one of the factors 
does not carry parallel vector fields.

To describe this in detail, let $g$ be a Riemannian 
metric on a compact manifold $N$.
An infinitesimal deformation of $g$ given by a symmetric $2$-tensor $h$ is 
orthogonal to the conformal class if $h$ is trace-free
(i.e.\ $\tr_g h=0$), and $h$ is orthogonal 
to the diffeomorphism orbit of $g$ if $h$ is divergence-free.
On trace-free, divergence-free symmetric $2$-tensors the linearization of 
the Ricci curvature functional $g\mapsto \Ric^g$ 
is given by the Einstein operator $\Delta^N_E=\frac12 \nabla^*\nabla - \mathring{R} $, where 
$\mathring{R}h(X,Y):=\sum h(R_{e_i,X}Y,e_i)$ for a frame $(e_i)$ for $g$. 
For any smooth family $g_t$ of Ricci-flat metrics of some fixed volume such that $g_0=g$ and such that $h=\frac{d}{dt}\bigr|_{t=0}g_t$ is divergence-free, $h$ is in addition trace-free and $\Delta^N_Eh=0$, see \cite[Chapter 12]{be87}.

If a trace-free, divergence-free symmetric $2$-tensor $h$ is in $\ker(\Delta_{E}^{N})$, then it is called an 
\emph{infinitesimal Ricci-flat 
deformation}. An infinitesmal Ricci-flat deformation $h$ is called 
\emph{integrable} if there exists a smooth family $g_t$ of Ricci-flat metrics 
such that $g_0=g$ and
$\frac{d}{dt}\bigr|_{t=0}g_t=h$. We say that $g$ is a \emph{stable} Ricci-flat metric if $\Delta^N_E$ is 
positive-semidefinit on trace-free, divergence-free symmetric $2$-tensors. 
All metrics in $\Mpara(N)$ are stable, see \cite{wa91} and also \cite{dww05}.

\begin{theorem*}[Ricci-flat deformations of products, {\cite[Prop.~4.5 and 4.6]{kr15}}]
If $(N^n,g)$ and $(P^p,h)$ are two stable Ricci-flat manifolds, then
 $(N\times P,g+ h)$ is also stable.\\
Furthermore, on trace-free, divergence-free symmetric $2$-tensors we have
\begin{align*}
  \ker(\Delta_{E}^{N\times P})= \R(p\cdot g-n\cdot h)\oplus \left(\Gamma_\parallel(TN)\odot \Gamma_\parallel(TP)\right)                           \oplus\ker(\Delta_{E}^{N})\oplus \ker(\Delta_{E}^{P}),
\end{align*}
where  $\Gamma_\parallel$ is the space of parallel sections. Thus, if all infinitesimal Ricci-flat deformations of $(N,g)$ and $(P,h)$ are integrable, then all infinitesimal Ricci-flat deformations of $(N\times P,g+ h)$ are integrable.
\end{theorem*}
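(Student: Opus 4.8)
The plan is to reduce the stability statement and the kernel computation to a careful analysis of the Einstein operator on the product $N\times P$ with the product metric $g+h$. The essential point is that the full space of sections of $S^2 T^*(N\times P)$ decomposes into a block built from sections pulled back from each factor together with a "mixed" block $\Gamma(T^*N)\otimes\Gamma(T^*P)$, and that this decomposition is respected by the relevant operators (the rough Laplacian and the curvature endomorphism $\mathring R$), because the curvature of a product metric is block-diagonal with no mixed components. I would first fix a product frame and write $\Delta_E^{N\times P}$ on each block. On symmetric $2$-tensors of the form $\alpha\otimes\beta$ pulled back from the factors the curvature operator $\mathring R^{N\times P}$ only sees the curvature of the respective factor, so $\Delta_E^{N\times P}$ restricted to tensors coming from $N$ agrees, up to the extra Laplacian in the $P$-directions, with $\Delta_E^N$, and symmetrically; on the mixed block $\mathring R^{N\times P}$ vanishes identically (a mixed curvature term $R_{X,Y}$ with one entry tangent to $N$ and one to $P$ is zero), so $\Delta_E^{N\times P}$ acts there as $\tfrac12\nabla^*\nabla$.

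With this block structure in hand, stability is the easy part: $\tfrac12\nabla^*\nabla$ is non-negative, so on the mixed block $\Delta_E^{N\times P}\ge0$ automatically, and on each factor block one uses stability of $(N,g)$ and $(P,h)$ together with the non-negativity of the additional transverse Laplacian. One has to be a little careful that the trace-free and divergence-free conditions on $N\times P$ interact correctly with the decomposition — in particular the "radial" direction $p\cdot g - n\cdot h$ is precisely the trace-free combination of the two pure-trace tensors $g$ and $h$, which is parallel and hence in the kernel — but this is routine bookkeeping.

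The kernel computation is where the real work sits. Suppose $k\in\ker(\Delta_E^{N\times P})$ is trace-free and divergence-free. Decompose $k$ into its $NN$-, $PP$- and mixed components. Because $\Delta_E^{N\times P}$ preserves this splitting, each component is itself in the kernel and satisfies the corresponding trace/divergence constraints. For the mixed component one gets $\nabla^*\nabla(\text{mixed})=0$, hence the mixed part is parallel; a parallel section of $T^*N\otimes T^*P$ is (dualizing) a sum of decomposable parallel tensors $\xi\odot\eta$ with $\xi\in\Gamma_\parallel(TN)$, $\eta\in\Gamma_\parallel(TP)$, giving the summand $\Gamma_\parallel(TN)\odot\Gamma_\parallel(TP)$. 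For the $NN$-component, expand it in a "Fourier" decomposition along $P$ and use that $\Delta_E^{N\times P}$ acts as $\Delta_E^N$ plus a non-negative $P$-Laplacian; vanishing of the sum forces the component to be constant along $P$ and to lie in $\ker(\Delta_E^N)$ — but one must separate the genuinely trace-free, divergence-free part on $N$ (contributing $\ker\Delta_E^N$) from the pure-trace part (which, when combined with the $PP$ pure-trace part and the trace-free constraint on the product, produces exactly the line $\R(p\cdot g-n\cdot h)$). The $PP$-component is handled symmetrically. Assembling the four pieces and checking the sum is direct gives the stated orthogonal decomposition.

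The main obstacle I expect is precisely this disentangling of the pure-trace directions from the transverse-traceless directions inside each factor block: a trace-free tensor on $N\times P$ need not be trace-free on each factor separately, and the divergence-free condition on the product couples the factor components to derivatives of the trace parts. Handling this cleanly — presumably by first projecting onto the transverse-traceless part and treating the conformal/trace directions by hand, using that $g$ and $h$ are Einstein (Ricci-flat) so that their pure-trace infinitesimal deformations are well understood — is the technical heart of the argument. The final "Thus …" sentence about integrability is then immediate: integrability of the infinitesimal deformations of $N$ and of $P$ gives integrability of the $\ker\Delta_E^N$ and $\ker\Delta_E^P$ summands by taking products of the corresponding families, the line $\R(p\cdot g-n\cdot h)$ is integrated by rescaling the two factors against each other, and the parallel mixed tensors are integrated by the deformations constructed in \cite{kr15} (or simply absorbed, since they correspond to metrics still of product type after a diffeomorphism).
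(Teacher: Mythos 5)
First, a point of reference: the paper itself contains no proof of this statement --- it is imported verbatim from Kr\"oncke \cite{kr15} (Prop.~4.5 and 4.6), so the only thing to compare your sketch with is the cited source. Your overall route is indeed the one taken there: split $S^2T^*(N\times P)$ into the $NN$-, $PP$- and mixed blocks, observe that both $\nabla^*\nabla$ and $\mathring R$ respect this splitting and that $\mathring R$ vanishes on the mixed block (so its kernel contribution is exactly the parallel sections, which by the holonomy principle for the product holonomy group are $\Gamma_\parallel(TN)\odot\Gamma_\parallel(TP)$), and then run a separation-of-variables argument on the factor blocks. The easy inclusion ``$\supseteq$'' and the integrability conclusion are also handled correctly, up to one inaccurate parenthetical: a mixed deformation $\xi\odot\eta$ shears the flat torus factor and the resulting metric is in general \emph{not} of product type up to diffeomorphism (a generic flat torus is not a Riemannian product); these deformations are nonetheless integrable as explicit lattice deformations.

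The genuine gap is the step you yourself flag as the ``technical heart'' and then do not carry out. To conclude from $\bigl(\Delta_E^N+\tfrac12\nabla^*_P\nabla_P\bigr)k_{NN}=0$ that $k_{NN}$ is constant along $P$ and lies in $\ker(\Delta_E^N)$, you integrate by parts and need $\int\langle\Delta_E^N k_{NN},k_{NN}\rangle\ge 0$. But for fixed $y\in P$ the tensor $k_{NN}(\cdot,y)$ is in general neither trace-free nor divergence-free on $N$ (only the combined trace over both factors vanishes, and the divergence-free condition on the product couples $\delta_N k_{NN}$ to the $P$-derivatives of the mixed block), whereas stability of $(N,g)$ gives non-negativity of $\Delta_E^N$ only on TT-tensors. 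One must therefore decompose $k_{NN}(\cdot,y)$ into pure-trace, Lie-derivative and TT parts, prove non-negativity of $\Delta_E^N$ on the first two (using Ricci-flatness, e.g.\ $\mathring R(fg)=f\,\Ric^g=0$ so $\Delta_E^N(fg)=\tfrac12(\Delta f)g$, and the intertwining of $\Delta_E$ with $\delta^*$ on Einstein manifolds), and show that the coupled constraints force the residual gauge and trace pieces into exactly the line $\R(p\cdot g-n\cdot h)$ and nothing more. This bookkeeping is precisely the content of Kr\"oncke's Propositions 4.4--4.6 and is where all the actual work lies; without it the claimed equality of kernels (as opposed to the inclusion ``$\supseteq$'') is not established.
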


In particular, if $\Gamma_\parallel(TN)=0$ or if
$\Gamma_\parallel(TP)=0$, then Ricci-flat deformations of $g\times h$ are up to diffeomorphisms again of product form.

%
%
\section{Rigidity of the restricted holonomy group}
In order to prove the theorem we first prove the analogous statement for the restricted holonomy.

\begin{proposition}[Rigidity of the restricted holonomy group]\label{rig.res}
Let $(M,g)$ be a compact Riemannian manifold whose universal covering
is spin and carries a parallel spinor. 
If $g_t$, $t\in I:=[0,T]$ is a smooth family of Ricci-flat metrics such that $g_0=g$, then $\Hol_0(M,g_t)$ is conjugate to $\Hol_0(M,g)$ in $\GL(n,\R)$, i.e. there are $Q_t\in\GL(n,\R)$ with  $\Hol_0(M,g_t)=Q_t \Hol_0(M,g)Q_t^{-1}$. 
Moreover, the map $t\mapsto Q_t$ can be chosen continuously.
\end{proposition}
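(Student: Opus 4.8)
The plan is to reduce to the product situation of Section~\ref{sec.prod} via the structure theorem, and then track how the holonomy of a small Ricci-flat deformation is pinned down by the structure theorem applied to the deformed metric. First I would pass to the universal covering $\witi M$, on which $\widetilde g_t := \pi^*g_t$ is a complete Ricci-flat metric whose holonomy equals $\Hol_0(M,g_t)$. Since $\witi M$ admits a parallel spinor for $t=0$, the de~Rham decomposition of $(\witi M, \widetilde g_0)$ is $\R^k \times \witi N_1 \times \cdots \times \witi N_r$ where each $\witi N_i$ is an irreducible Ricci-flat manifold with holonomy one of the groups of Table~\ref{ric.fla.hol} other than $\SO$; by Cheeger--Gromoll the Euclidean factor descends, up to finite cover, to a flat torus factor, and the $\witi N_i$ are compact and simply connected. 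So on a finite normal Riemannian cover $\ol M_0 \to M$ we have $(\ol M_0,\bar g_0) = T^k \times N_1 \times \cdots \times N_r$ isometrically, with the $N_i$ compact simply connected irreducible Ricci-flat, each with $\Gamma_\parallel(TN_i)=0$.

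The key step is to propagate this product structure along $t$. Lift the family $g_t$ to $\bar g_t$ on the fixed cover $\ol M_0$; this is again a smooth family of Ricci-flat metrics, and $\Hol_0(\ol M_0,\bar g_t) = \Hol_0(M,g_t)$ since the cover is finite. Now I would invoke the product-rigidity theorem of Kr\"oncke iteratively. The tangent space to the space of Ricci-flat metrics at $\bar g_0$, restricted to divergence-free tensors, is $\ker(\Delta_E^{\ol M_0})$, and by the displayed formula this decomposes as a sum over the factors, with the only "cross terms" being $\Gamma_\parallel(T\,\cdot\,)\odot\Gamma_\parallel(T\,\cdot\,)$ and the line $\R(p\cdot g - n\cdot h)$ coming from rescaling factors against each other. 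Because every irreducible factor $N_i$ has no parallel vector fields, the only genuinely reducible-type deformations are (a) rescalings of the torus factor, (b) rescalings among the $N_i$ (trading volume between factors), and (c) deformations of the flat metric on $T^k$. Deformations of type (a), (b) and the compact-factor deformations all preserve the de~Rham type of each factor; deformations of type (c) keep the torus flat. By integrating these observations — i.e.\ by a connectedness argument on $I$ (the set of $t$ where $\bar g_t$ is still isometric to a product $T^{k_t}\times\prod N_i^{(t)}$ with each $N_i^{(t)}$ a Ricci-flat deformation of $N_i$ is open by the local product-rigidity, and closed by smoothness and properness of holonomy) — I get that for every $t\in I$ the metric $\bar g_t$ is isometric to $T^{k}\times N_1^{(t)}\times\cdots\times N_r^{(t)}$, where $N_i^{(t)}$ is a Ricci-flat deformation of $N_i$ with the same restricted holonomy $\Hol_0(N_i)$ (the latter because on an irreducible Ricci-flat manifold with special holonomy the holonomy is locally rigid under Ricci-flat deformation, itself a consequence of Wang's theorem that the parallel spinors persist together with Table~\ref{ric.fla.hol}). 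Hence $\Hol_0(\ol M_0,\bar g_t) = \{1\}^{T^k}\times \prod_i \Hol_0(N_i)$ is independent of $t$ as an abstract subgroup of $\SO(n)$, but it sits inside $\SO(n)$ with respect to the $t$-dependent splitting of $T_x\ol M_0$. Choosing $Q_t\in\GL(n,\R)$ to be the change-of-frame between the de~Rham splitting at $t$ and at $0$ — which can be built from the smoothly varying parallel distributions and hence depends continuously (indeed smoothly) on $t$ — gives $\Hol_0(M,g_t)=Q_t\,\Hol_0(M,g)\,Q_t^{-1}$ with $t\mapsto Q_t$ continuous.

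The main obstacle I anticipate is the bookkeeping that turns the infinitesimal statement (the $\ker\Delta_E$ decomposition) into the global-in-$t$ product structure: one must verify that the local product decompositions obtained near each $t$ glue coherently along $I$ (no jump in the number $r$ of irreducible factors, no collapse or splitting of a factor, and the torus dimension $k$ constant), and that the de~Rham splitting of $T_x\ol M_0$ can be chosen to vary continuously in $t$ so that the conjugating matrices $Q_t$ are continuous. Both points are essentially continuity/openness-closedness arguments, but they require care: openness uses the local rigidity from Kr\"oncke's theorem (and, at the irreducible factors, Wang's parallel-spinor persistence to keep the holonomy from degenerating to $\SO$), while closedness uses that holonomy groups vary upper-semicontinuously and that a $C^\infty$-limit of product metrics on a compact manifold is again a product metric with the same factors. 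Once this is set up, the descent from $\ol M_0$ back to $M$ is immediate because passing to a finite cover does not change the restricted holonomy.
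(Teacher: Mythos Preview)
Your outline is correct and follows the paper's own route: pass to the finite normal cover $\ol M\times T^q$ given by the Cheeger--Gromoll/Fischer--Wolf structure theorem, split $\ol M$ by de~Rham into simply-connected irreducible Ricci-flat factors (each carrying a parallel spinor, hence with holonomy from Table~\ref{ric.fla.hol}), use Kr\"oncke's product formula together with the vanishing of $\Gamma_\parallel(TN_i)$ to see that the lifted family of Ricci-flat metrics stays a product up to $\Diff_0$, and then invoke the known irreducible rigidity on each factor.

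The one point you should make explicit is the input that turns Kr\"oncke's description of $\ker\Delta_E$ into genuine \emph{local product-rigidity}: you need that every infinitesimal Ricci-flat deformation of each irreducible factor is integrable, so that the Ricci-flat premoduli space near the product metric is a smooth manifold of the predicted dimension and hence coincides with the product locus. The paper supplies this via Nordstr\"om~\cite{no13} (building on Goto~\cite{goto04}); once that is in place, the openness half of your connectedness argument is immediate, and the closedness half follows by iterating the local statement along the compact interval~$I$ (each $\bar g_t$ again has a parallel spinor, so the hypothesis reapplies) rather than from the claim that a $C^\infty$-limit of product metrics is a product, which would itself require justification.
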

In the proposition we have fixed a base point 
$x\in M$ and an identification $T_xM\cong \R^n$. 
Thus $\Hol_0(M,g_t)\in \GL(n,\R)$.

\begin{remark}\

\begin{enumerate}[(i)]
\item The proposition (and our main theorem) apply for example to Riemannian 
spin manifolds carrying a parallel spinor. Under these assumptions, 
the universal covering 
$(\witi M,\tilde g)$ is spin as well, and the parallel spinor on $(M,g)$ 
lifts to a parallel 
spinor on $(\witi M,\tilde g)$.
\item The proposition (and our main theorem) are false for non-compact manifolds. Indeed, let $g$ be a (possibly non-complete) 
Ricci-flat metric on $\R^n$ with a parallel spinor and non-trivial holonomy group $H$. Let $\mu_s:\R^n\to \R^n$ be multiplication with $1-s$ for $1>s\geq0$. Then $g_s:=(1-s)^{-2}\mu_s^*g$ also has holonomy $H$ for $1>s\geq0$. However, for $s\to 1$ the metric $g_s$ converges in the compact-open $C^\infty$ topology to the flat metric $g_1$ with trivial holonomy.	
\end{enumerate}

\end{remark}

\begin{proof}[Proof of the proposition]
Consider the universal Riemannian covering $(\witi M,\tilde g)$ of $(M,g)$, and let $\Gamma\cong\pi_1(M)$ be the group of Deck transformations. 
Using the structure theorem for Ricci-flat metrics  
\cite{chgr71,fiwo75} recalled in \cref{ric.fla.met}, 
we know that $(M,g)$ has a finite normal Riemannian covering isometric to $(\ol M,\bar g) \times (T^q,g_{fl})$, where $(\ol M,\bar g)$ is compact, simply-connected and Ricci-flat, and $(T^q,g_{fl})$ a flat torus.
By the de Rham decomposition theorem, $(\ol M,\bar g)$ is globally isometric to a Riemannian product of compact, simply-connected and {\em irreducible} Ricci-flat manifolds $(\ol M_i,\bar g_i)$, $i=1,\ldots,r$. 
Thus we obtain a finite Riemannian covering
\[
(\ol M\times T^q=\ol M_1\times\ldots\times\ol M_r\times T^q,\bar g_1\times\ldots\times\bar g_r\times g_{fl})\to(M,g)
\]
whose holonomy group equals $H_1\times\ldots\times H_r=\Hol(\witi M,\tilde g)=\Hol_0(M,g)$.

Since $\witi M$ is spin, so is $\ol M$, and thus all $\ol M_i$ are spin. 
The spin structure on each $\ol M_i$ is unique as $\ol M_i$ is 
simply-connected.
On $T^q$ there are several distinct spin structures; we 
choose the unique spin structure that admits parallel spinors. 
A parallel spinor on a product corresponds to a product of parallel 
spinors (this follows from simple representation theoretic considerations 
as in \cite[Proposition 4.5]{ad96}, 
see for instance \cite[Theorem 2.5]{le00} for a detailed proof).
Thus the given parallel spinor on 
$\witi M$ implies the existence of parallel spinors on each  $\ol M_i$.
The holonomy group $H_i:=\Hol(\ol M_i,\bar g_i)$ is therefore conjugate 
to one of the groups of Table~\ref{ric.fla.hol}. 

Then by taking products, we obtain a parallel spinor on
$(\ol M\times T^q,\bar g\times g_{fl})$, which lifts to a parallel spinor on 
$\witi M$, and we can assume without loss of generality that this is the given 
parallel spinor on $\witi M$ discussed above.


\smallskip

Next consider the family of Ricci-flat metrics $g_t$ on $M$. The covering $\ol M\times T^q \to M$ can be chosen independently of~$t$.
And let $\bar g_t\times g_{fl,t}$ be the pullback of~$g_t$ to $\ol M\times T^q$ induced by $(M,g_t)$. 
The metric $\bar g=\bar g_1\times\ldots\times\bar g_r$ is a product of irreducible Ricci-flat metrics which admit a parallel spinor, and thus the metrics
$\bar g_i$ are stable. The factors $(\overline{M}_i,\bar g_i)$ 
are irreducible, hence defined by a torsion-free $G$-structure for one of the groups $G\neq\SO(n)$ taken from Table~\ref{ric.fla.hol}. It then follows from~\cite{no13} (building on work of~\cite{goto04}) that infinitesimal Ricci-flat 
deformations of~$\bar g_i$ are integrable. 
Finally, they admit no harmonic $1$-forms as simply-connectedness 
implies vanishing of the first Betti number $b_1$.
Therefore the theorem on Ricci-flat deformations of products in Section~\ref{sec.prod}
implies that the Ricci-flat deformation $\bar g_t$ of~$\bar g$ is --- up to pull-back by diffeomorphisms in $\Diff_0(\bar M)$ --- of the form $\bar g_{t,1}\times\ldots\times\bar g_{t,r}$, where $\bar g_{t,i}$ is a smooth Ricci-flat deformation of~$\bar g_i$. By the known rigidity results in the irreducible case~\cite{no13,wa91}, 
the holonomy group of nearby Ricci-flat deformations of~$\bar g_i$ will be conjugate to $H_i$, and the conjugating element can be chosen continuously in $t$.
\end{proof}
%
%
%

\section{Applications}

\begin{corollary}\label{dim.par.con}
Let $M$ be a compact spin manifold, and let $g_0 \in \Mpara(M)$. If $g_t$, $t\in I:=[0,T]$ is a smooth family of Ricci-flat metrics on $M$ such that $g_0=g$, then $g_t \in \Mpara(M)$ and $\dim\mc P(M,g_t)$ is constant in $t$.
\end{corollary}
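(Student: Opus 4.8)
The plan is to bootstrap from Proposition~\ref{rig.res} and Theorem~\ref{theo.rig}: once we know that $\Hol(M,g_t)$ stays in a fixed conjugacy class in $\GL(n,\R)$, the restricted holonomy $\Hol_0(M,g_t)$ does too, and in particular $\Hol_0(M,g_t)$ is conjugate in $\SO(n)$ to $\Hol_0(M,g_0)$ (conjugacy in $\GL(n,\R)$ between compact subgroups of $\SO(n)$ can be upgraded to conjugacy in $\O(n)$, and since $M$ is oriented we may take it in $\SO(n)$). Since $\Hol_0(M,g_t)=\Hol(\widetilde M,\tilde g_t)$, this means the universal covering $(\widetilde M,\tilde g_t)$ has restricted holonomy conjugate to that of $(\widetilde M,\tilde g_0)$, which is a product of groups from Table~\ref{ric.fla.hol}. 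Hence $(\widetilde M,\tilde g_t)$ carries a non-zero parallel spinor, i.e.\ $g_t\in\Mpara(M)$. That settles the first assertion.

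For the statement that $\dim\mc P(M,g_t)$ is constant, I would first handle the dimension of the space of parallel spinors on the universal covering. By the de Rham / structure-theorem decomposition used in the proof of Proposition~\ref{rig.res}, the spin universal covering splits (after passing to a finite cover, or directly on $\widetilde M$ which is already simply connected up to the torus factor) as a Riemannian product whose factors have holonomy among $\SU(m)$, $\Sp(k)$, $\Spin(7)$, $\Gt$ and a flat Euclidean factor; a parallel spinor on a product is a tensor product of parallel spinors on the factors, so $\dim\mc P(\widetilde M,\tilde g_t)$ is the product of the numbers in Table~\ref{ric.fla.hol} over the non-flat factors, times $2^{\lfloor q/2\rfloor}$ for the flat $\R^q$ factor. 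Since the list of factor holonomies is, by the holonomy rigidity just established, the same for $g_t$ as for $g_0$ (the number of irreducible factors $r$ and each $H_i$ are locally constant, being determined by the conjugacy class of $\Hol_0$), this number is constant in $t$.

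It then remains to pass from parallel spinors on $\widetilde M$ to parallel spinors on $M$. The space $\mc P(M,g_t)$ is the subspace of $\Gamma$-invariant elements of $\mc P(\widetilde M,\tilde g_t)$, where $\Gamma\cong\pi_1(M)$ acts on parallel spinors through a representation $\rho_t\colon\Gamma\to\U(\mc P(\widetilde M,\tilde g_t))$ (the action is unitary on the finite-dimensional space of parallel spinors). As $t$ varies continuously, one can trivialize the bundle of spaces $\mc P(\widetilde M,\tilde g_t)$ — using, say, the continuous family $Q_t$ from Proposition~\ref{rig.res} together with the identification of parallel spinors with a fixed representation space of $\Hol_0$ — so that $\rho_t$ becomes a continuous family of representations of the discrete group $\Gamma$ into a fixed compact group $\U(N)$. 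A continuous path of representations of a finitely generated group into a compact Lie group consists, locally, of conjugate representations (rigidity of representations of discrete groups into compact groups under continuous deformation: the character is locally constant, or one uses that $\Gamma/\pi_1(\widetilde M)$-actions... more simply, $\operatorname{Hom}(\Gamma,\U(N))$ is a real-analytic variety and continuity of $\dim$ of invariants), so $\dim\mc P(M,g_t)=\dim(\mc P(\widetilde M,\tilde g_t))^{\rho_t}$ is constant.

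The main obstacle is the last step: making precise that the $\Gamma$-action on the parallel spinor space varies continuously and deducing that its invariant subspace has locally constant dimension. The subtlety is that $\Gamma$ is infinite and the family $\mc P(\widetilde M,\tilde g_t)$ is a priori only an abstract family of vector spaces; one needs to build an honest continuous (or smooth) trivialization compatible with the $\Gamma$-action, e.g.\ by transporting a fixed parallel spinor along $\nabla^{g_t}$-parallel transport from the basepoint or by the clutching with $Q_t$, and then invoke that for a continuous path $\rho_t$ of unitary representations of a fixed group the dimension of the fixed subspace cannot jump (the projection onto the invariant subspace, $P_t=\int_{\Gamma}\rho_t$ interpreted appropriately, varies continuously, and a continuous family of projections has constant rank). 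Once this rigidity-of-invariants fact is in place, the corollary follows immediately.
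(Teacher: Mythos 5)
Your overall strategy (holonomy rigidity forces the dimension of the space of parallel spinors upstairs to be constant, then descend by taking invariants under the deck group) is the same as the paper's, and the first two steps are fine. But the descent step, which you yourself flag as the main obstacle, has a genuine gap that your proposed fix does not close. For an \emph{infinite} finitely generated group $\Gamma$, a continuous family of unitary representations $\rho_t:\Gamma\to\U(N)$ does \emph{not} have locally constant dimension of invariants: take $\Gamma=\Z$, $N=1$, $\rho_t(n)=e^{2\pi i nt}$; the invariant subspace is $\C$ at $t=0$ and $0$ for small $t\neq 0$. The character is not locally constant, $\Hom(\Gamma,\U(N))$ being a real-analytic variety does not help, and the averaging projection ``$\int_\Gamma\rho_t$'' (i.e.\ Haar average over the closure of the image) does not vary continuously in $t$, precisely because the closure of the image can jump. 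In general your argument only yields upper semi-continuity of $\dim\mc P(M,g_t)$ (it is the kernel of the continuously varying map $v\mapsto(\rho_t(\gamma)v-v)_{\gamma\in S}$ for a finite generating set $S$), not local constancy.

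The missing idea is to not work on the universal covering with the infinite group $\Gamma=\pi_1(M)$, but on the \emph{finite} normal Riemannian covering $\ol M\times T^q\to M$ provided by the structure theorem (which can be chosen independently of $t$), whose deck group $\ol\Gamma=\Gamma/\Z^q$ is finite. One checks that the chosen spin structure on $\ol M\times T^q$ is the pullback of the one on $M$ and that $\ol\Gamma$ acts on $\mc P(\ol M\times T^q,\bar g_t\times g_{fl,t})$, whose dimension is constant by holonomy rigidity; then $\mc P(M,g_t)$ is the $\ol\Gamma$-invariant subspace and the character formula for finite groups,
\[
\dim\mc P(M,g_t)=\frac1{|\ol\Gamma|}\sum_{\gamma\in\ol\Gamma}\Tr\rho_t(\gamma),
\]
is continuous in $t$ and integer-valued, hence locally constant. (Equivalently: you would need to prove that your $\rho_t$ factors through a fixed finite quotient of $\Gamma$ for all $t$ simultaneously, which is essentially the same reduction.) Without this finiteness input your final step fails.
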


\begin{remark}
A parallel spinor is a special case of a real Killing spinors. If $(M,g)$ 
carries a (non-zero) real Killing spinor, then $g$ is Einstein.
However, it was shown in \cite{coevering}, that our results do no longer hold
if we replace parallel spinors by real Killing spinors and Ricci-flat 
metrics by Einstein metrics.
\end{remark}

\begin{proof}
In the proof of \cref{rig.res} we have chosen a spin structure on $\ol M\times T^q$. As $(M,g_t)$ carries a parallel
spinor, this spin structure has to coincide with the pullback of the spin structure on $M$.

Let $\ol\Gamma$ be the (finite) group of Deck transformations of the covering space $\ol M \times T^q \to M$. By normality of this covering space, 
$\ol \Gamma$ acts transitively on each fiber, and this action lifts to the spin structure, and thus to the spinor bundle.
The dimension of $\mc P(\ol M\times T^q,\bar g_t\times g_{fl,t})$ is determined by the holonomy group 
$\Hol(\ol M,\bar g_t)=\Hol(\ol M\times T^q,\bar g_t\times g_{fl})=\Hol_0(M,g_t)$ which therefore does not depend on $t$. Further, $\mc P(M,g_t)$ 
can be identified with the subspace of $\mc P(\ol M\times T^q,\bar g_t\times g_{fl,t})$ which is invariant 
for the representation $\rho_t:\ol\Gamma\to \GL(\mc P(\ol M\times T^q,\bar g_t\times g_{fl,t}))$. 
Representation theory of finite groups implies 
\[
\dim\mc P(M,g_t)= \frac1{|\ol\Gamma|}\sum_{\gamma\in\ol\Gamma}\Tr\rho_t(\gamma)
\]
which is continuous in $t$, hence locally constant.
\end{proof}

This corollary extends Wang's result from~\cite{wa91} which states that the dimension is constant if $M$ is simply-connected and $g_0$ irreducible. An immediate consequence is that the premoduli space $\Mpara(M)/\Diff_0(M)$ is an open subset of the premoduli space of Ricci-flat metrics. In general, the premoduli space of Ricci-flat metrics 
is merely a real analytic subset of a finite-dimensional 
real analytic manifold, and the tangent 
space of this manifold can be identified with the space of infinitesimal 
Ricci-flat deformations and rescalings of the metric
(cf.\ for instance the discussion in~\cite[Chapter 12.F]{be87} 
building on Koiso's work~\cite{ko83}).

\begin{corollary}\label{cor.smooth}
Assume that $M$ is a compact spin manifold.
Then $\Mpara(M)/\Diff_0(M)$ is a smooth manifold.
Furthermore,  $\Mpara(M)$ is a smooth submanifold in the space of all metrics on $M$.
\end{corollary}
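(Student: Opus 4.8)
The plan is to identify the premoduli space $\Mpara(M)/\Diff_0(M)$ locally with the zero set of a suitable map on a slice, and to use the integrability statement established via Proposition \ref{rig.res} to show that this zero set is in fact cut out trivially, i.e.\ that every infinitesimal deformation in the relevant kernel is integrable. Concretely, fix $g \in \Mpara(M)$ and lift to the finite normal covering $\ol M \times T^q \to M$ as in the proof of Proposition \ref{rig.res}, so that $\bar g = \bar g_1 \times \cdots \times \bar g_r \times g_{fl}$. Ebin's slice theorem provides a slice $\mathcal{S}_g$ for the $\Diff_0(M)$-action at $g$, whose tangent space at $g$ is the space of divergence-free symmetric $2$-tensors; within this slice the premoduli space of Ricci-flat metrics is the zero set of the (Ricci, up to rescaling) operator, whose linearization is $\Delta_E^M$ restricted to trace-free divergence-free tensors together with the rescaling direction. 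The first step is therefore to compute $\ker \Delta_E^M$ in terms of the factors using the product formula from Section \ref{sec.prod}: pulling back to $\ol M \times T^q$ and taking $\ol\Gamma$-invariants, the integrability statement quoted from \cite{no13, goto04} for the irreducible factors $\bar g_i$, the obvious integrability of flat deformations of the torus, and the product theorem together show that every infinitesimal Ricci-flat deformation of $\bar g$ is integrable; descending to $M$ (using that $\ol\Gamma$ acts through isometries of each nearby product metric by Proposition \ref{rig.res}, so that invariance is preserved along the integrating family) shows every infinitesimal Ricci-flat deformation of $g$ is integrable.

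The second step is to upgrade integrability of individual tangent vectors to a genuine smooth manifold structure. Here I would invoke Koiso's analysis (cf.\ \cite[Chapter 12.F]{be87}, \cite{ko83}): the premoduli space of Ricci-flat metrics near $g$ is a real-analytic subset of a finite-dimensional real-analytic manifold whose tangent space is $\ker \Delta_E^M \oplus \R g$ (rescalings), and it is the zero set of an obstruction map whose $k$-jets vanish precisely when all infinitesimal deformations integrate to order $k$. Since we have just shown \emph{every} element of $\ker\Delta_E^M$ is integrable — in fact integrable within $\Mpara(M)$, by the product construction — the obstruction map vanishes identically, so the premoduli space of Ricci-flat metrics near $g$ is itself a smooth (indeed real-analytic) manifold of dimension $\dim\ker\Delta_E^M + 1$, and $\Mpara(M)/\Diff_0(M)$ is an open subset of it by Corollary \ref{dim.par.con}, hence a smooth manifold.

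The third step transfers smoothness from the premoduli space back to $\Mpara(M)$ itself. By Ebin's slice theorem the map $\Diff_0(M) \times \mathcal{S}_g \to \mathcal{M}(M)$, $(\varphi, h) \mapsto \varphi^* h$, is a local homeomorphism onto a neighborhood of $g$ (after passing to appropriate Sobolev completions and then using elliptic regularity to return to smooth objects); intersecting with $\Mpara(M)$, which is $\Diff_0(M)$-invariant, we get that $\Mpara(M)$ is locally diffeomorphic to $\Diff_0(M) \times \big(\Mpara(M)\cap\mathcal{S}_g\big)$, and the second factor is the smooth premoduli chart from step two. Since $\Diff_0(M)$ is a smooth (Fréchet/ILH) manifold, $\Mpara(M)$ is a smooth submanifold of $\mathcal{M}(M)$ whose normal directions are exactly the trace-free divergence-free tensors \emph{not} in $\ker\Delta_E^M$ together with conformal directions — more precisely, its tangent space at $g$ is $\im(\delta_g^*) \oplus \R g \oplus \ker\Delta_E^M$.

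The main obstacle I expect is the passage between the analytic (Sobolev) category, where the slice theorem, the implicit function theorem, and elliptic regularity live, and the smooth category in which the statement is phrased, combined with correctly tracking the $\ol\Gamma$-invariance through the integrating families so that the downstairs integrability on $M$ — not merely upstairs on $\ol M \times T^q$ — is legitimately established; the latter is exactly where Proposition \ref{rig.res} is needed, since it guarantees the deck group continues to act by isometries (in fact by a continuously varying conjugate representation) on the deformed product metrics, so that $\ol\Gamma$-invariant infinitesimal deformations integrate to families of $\ol\Gamma$-invariant metrics, i.e.\ to families of metrics on $M$.
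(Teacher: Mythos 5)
Your overall architecture --- reduce to integrability of infinitesimal Ricci-flat deformations, establish integrability on the finite cover $\ol M\times T^q$ via the product theorem of Section~\ref{sec.prod} together with \cite{no13,goto04}, and then invoke Koiso's structure theory to convert unobstructedness into smoothness of the premoduli space --- matches the paper's. The genuine gap is in your descent from $\ol M\times T^q$ to $M$. You claim that Proposition~\ref{rig.res} ``guarantees the deck group continues to act by isometries on the deformed product metrics, so that $\ol\Gamma$-invariant infinitesimal deformations integrate to families of $\ol\Gamma$-invariant metrics''. Proposition~\ref{rig.res} provides no such thing: its hypothesis is an already given family of Ricci-flat metrics \emph{on $M$} (equivalently, an already $\ol\Gamma$-invariant family upstairs), and its conclusion concerns holonomy, not equivariance of integrating families. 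An integrating family for an invariant tangent vector produced by the unobstructedness result upstairs has no reason to consist of $\ol\Gamma$-invariant metrics, so as written your argument does not establish integrability on $M$ itself.

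The paper closes this gap by a more elementary device: once the premoduli space of Ricci-flat metrics on $\ol M\times T^q$ is known to be a smooth (finite-dimensional) manifold, the finite group $\ol\Gamma$ acts on it by pull-back, and the fixed point set of a finite group acting on a smooth manifold is a smooth submanifold (average a Riemannian metric and linearize the action via the exponential map at a fixed point); this fixed point set is precisely the locus of Ricci-flat metrics descending to $M$. In particular its tangent space is the $\ol\Gamma$-invariant part of the ambient tangent space, which yields exactly the statement you were trying to prove by hand --- every invariant infinitesimal deformation is tangent to an invariant family --- without having to equivariantly modify a given integrating curve. Your remaining steps (Ebin's slice theorem to transfer smoothness of the premoduli space back to $\Mpara(M)$ inside the space of all metrics, and Corollary~\ref{dim.par.con} to see that $\Mpara(M)$ is open in the Ricci-flat locus) are sound and consistent with what the paper does implicitly.
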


\begin{proof}
We have to check that every infinitesimal Ricci-flat deformation of $g_0 \in \Mpara(M)$ is in fact integrable. Consider again the finite cover $\ol M \times T^q \to M$. The space of Ricci-flat metrics on $\ol M \times T^q$ is smooth by the proof of Proposition \ref{rig.res}, and so is the space of Ricci-flat metrics on $M$ as the fixed point set of the finite group $\ol\Gamma$ acting on the former space by pull-back. This implies the result.
\end{proof}

This generalises Nordstr\"om's result \cite{no13} for metrics defined by a torsion-free $G$-structure for one of the groups $G\neq\SO(n)$ taken from Table~\ref{ric.fla.hol} to general metrics in $\Mpara(M)$. Nordstr\"om makes strong use of Goto's earlier result \cite{goto04} about the unobstructedness of the deformation theory of such $G$-structures.

\medskip

A further application involves the spinorial energy functional introduced in~\cite{aww15}, to which we refer for details. To define it, consider the space of sections $\mc N$ of the universal bundle of unit spinors over $M$. A section $\Phi\in\mc N$ can be thought of as a pair $(g,\phi)$ where $g$ is a Riemannian metric and $\phi\in\Gamma(\Sigma_gM)$ is a $g$-spinor of constant length one. The relevant functional is defined by
\ben
\mc{E}:\mc{N}\to\R_{\geq 0},\quad \Phi \mapsto\tfrac{1}{2}\int_M|\nabla^g\phi|_g^2\,dv^g,
\ee
where $\nabla^g$ denotes the Levi-Civita connection on the $g$-spinor bundle $\Sigma_gM$, $|\,\cdot\,|_g$ the pointwise norm on $T^*\!M \otimes \Sigma_gM$ and $dv^g$ the Riemann-Lebesgue measure given by the volume form of $g$. By~\cite[Corollary 4.10]{aww15} the set of critical points $\mr{Crit}(\mc E)$ consists precisely of $g$-parallel unit spinors $(g,\phi)$, provided that $\dim M>2$. Since by \cref{dim.par.con} the dimension of the space of parallel spinors is constant under Ricci-flat deformations, the proof of~\cite[Theorem 4.17]{aww15} immediately implies the subsequent

\begin{corollary}
The functional $\mc E$ is Morse-Bott, i.e.\ the critical set $\mr{Crit}(\mc E)$ is smooth and $\mc E$ is non-degenerate transverse to $\mr{Crit}(\mc E)$.
\end{corollary}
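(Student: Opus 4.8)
The plan is to verify the two halves of the Morse--Bott condition separately: that $\mr{Crit}(\mc E)$ is a smooth submanifold of $\mc N$, and that at each of its points the Hessian of $\mc E$ is non-degenerate on a complement of $T\mr{Crit}(\mc E)$. Throughout I would work in the analytic setting of~\cite{aww15}, in which $\mc N$, the space of Riemannian metrics on $M$, and $\Mpara(M)$ are smooth manifolds and the implicit function theorem applies; recall that by~\cite[Corollary 4.10]{aww15}, since $\dim M\geq 3$, a section $(g,\phi)\in\mc N$ is critical for $\mc E$ exactly when $\phi$ is $g$-parallel, so that $\mc E$ attains its minimal value $0$ along $\mr{Crit}(\mc E)$ and $\mr{Hess}\,\mc E$ is positive semidefinite there.

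For the smoothness of the critical set I would use the projection $p\colon\mr{Crit}(\mc E)\to\{\text{metrics on }M\}$, $(g,\phi)\mapsto g$. Its image is the set $\mc M'$ of Ricci-flat metrics on $M$ carrying a non-zero parallel spinor; pulling such a spinor back to the universal cover gives $\mc M'\subset\Mpara(M)$. Moreover $\mc M'$ is open in $\Mpara(M)$, since along any smooth path in $\Mpara(M)$ --- automatically a path of Ricci-flat metrics --- the number $\dim\mc P(M,g_t)$ is constant by \cref{dim.par.con}; and it is closed, since $g\mapsto\dim\mc P(M,g)=\dim\ker(\nabla^{g*}\nabla^g)$ is upper semicontinuous. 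Hence $\mc M'$ is a union of connected components of $\Mpara(M)$, so a smooth submanifold of the space of all metrics by \cref{cor.smooth}. Over $\mc M'$ the family $g\mapsto\mc P(M,g)$ is the kernel of a smooth family of elliptic self-adjoint operators of locally constant finite rank, hence a smooth finite-dimensional vector bundle $\mc V\to\mc M'$, and $\mr{Crit}(\mc E)$ is precisely its unit sphere bundle $S(\mc V)$, in particular a smooth submanifold of $\mc N$. (Passing to the $\Diff_0(M)$-quotient, or to a local slice, makes the base finite-dimensional, again by \cref{cor.smooth}; this is the form in which the fact enters~\cite{aww15}.)

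For the transversal non-degeneracy I would invoke the second-variation computation behind~\cite[Theorem 4.17]{aww15}. After gauge fixing, $T_{(g,\phi)}\mc N$ splits into a spinorial summand --- the unit-norm-preserving variations of $\phi$ at fixed $g$ --- on which $\mr{Hess}\,\mc E$ is represented by the operator $\nabla^{g*}\nabla^g$, so that its kernel consists of the parallel spinors and accounts for the vertical (sphere-fibre) directions of $S(\mc V)$; and a metric summand --- trace-free divergence-free symmetric $2$-tensors together with the scaling line --- on which, modulo curvature terms acting through the fixed parallel spinor $\phi$, it is represented by the Einstein operator $\Delta_E$ of $g$, whose kernel is the space of infinitesimal Ricci-flat deformations. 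The mixed block contributes nothing new and is controlled as in~\cite{aww15}. By \cref{cor.smooth} every infinitesimal Ricci-flat deformation of $g$ is integrable, hence tangent to $\mc M'$, so $\ker\mr{Hess}_{(g,\phi)}\mc E$ equals the sum of $T_g\mc M'$ with the tangent space at $\phi$ to the sphere fibre of $\mc V$, that is, $T_{(g,\phi)}\mr{Crit}(\mc E)$; together with positive semidefiniteness of $\mr{Hess}\,\mc E$, this is exactly the non-degeneracy claim.

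The one substantive point --- and the genuinely new ingredient --- is this last step: matching the metric part of the kernel of the Hessian with the tangent space of an honest manifold requires not merely that $\ker\Delta_E$ have locally constant dimension, but that each of its elements integrate to a curve of metrics carrying a parallel spinor. In~\cite{aww15} this was available only under a simple-connectedness-and-irreducibility assumption; in the present generality it is exactly the assertion of \cref{cor.smooth}, while \cref{dim.par.con} supplies the constancy of the spinorial part. Once both are in hand, the proof of~\cite[Theorem 4.17]{aww15} goes through without change.
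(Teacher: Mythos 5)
Your proposal is correct and follows essentially the same route as the paper, which simply observes that once \cref{dim.par.con} (local constancy of $\dim\mc P(M,g)$ under Ricci-flat deformations) and \cref{cor.smooth} (smoothness of $\Mpara(M)$, i.e.\ integrability of infinitesimal Ricci-flat deformations) are available, the proof of~\cite[Theorem 4.17]{aww15} carries over verbatim. Your identification of $\mr{Crit}(\mc E)$ as the unit sphere bundle of the parallel-spinor bundle over a union of components of $\Mpara(M)$, and of the two new ingredients replacing the simple-connectedness/irreducibility hypothesis of~\cite{aww15}, is exactly the content the paper leaves implicit.
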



Now the universal covering group of $\Diff_0(M)$, the diffeomorphisms homotopic to the identity, naturally acts on $\mr{Crit}(\mc E)$. It also follows 
with the arguments above that the premoduli space of critical points, that is, $\mr{Crit}(\mc E)$ divided by this action, is smooth as well. 

\section{Rigidity of the full holonomy group}

We now prove Theorem~\ref{theo.rig}, using 
the following fact from Lie group theory.

\begin{theorem*}[Montgomery-Zippin~\cite{mozi42}]
Let $H_0$ be a compact subgroup of a Lie group~$G$. Then there exists an open neighbourhood $U$ of $H_0$ such that if $H$ is a compact subgroup of $G$ contained in $U$, then there exists $g\in G$ with $g^{-1}Hg\subset H_0$. Moreover, upon sufficiently shrinking $U$, $g$ can be chosen in any neighbourhood of the identity of $G$.
\end{theorem*}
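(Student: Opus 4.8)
The plan is to recast the statement as a fixed-point problem on the homogeneous space $X := G/H_0$ and then to produce the fixed point by an averaging (center-of-mass) argument. The crucial observation is that if a point $x = gH_0 \in X$ is fixed by the left action of $H$, then $HgH_0 = gH_0$, which is precisely the condition $g^{-1}Hg \subset H_0$. Thus it suffices to find, for every compact subgroup $H$ lying in a suitable neighbourhood $U$ of $H_0$, a point of $X$ fixed by $H$ and lying close to the base point $x_0 := eH_0$; note that $x_0$ is itself fixed by $H_0$. Since $H_0$ is compact, its linear isotropy representation on $T_{x_0}X$ has relatively compact image, so $X$ admits a $G$-invariant Riemannian metric; I fix one. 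With respect to it every element of $G$ acts by isometries, and every element of $H_0$ fixes $x_0$, hence preserves each geodesic ball centred at $x_0$.

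Next I would construct the fixed point. Choose $\rho$ smaller than the convexity radius of $X$ at $x_0$, so that $\overline{B_\rho}(x_0)$ is geodesically convex and the Riemannian center of mass (Karcher mean) of any probability measure supported in it is well defined and unique. Because the orbit map $g \mapsto g\cdot x_0$ is continuous and $H_0\cdot x_0=\{x_0\}$, there is an open neighbourhood $U$ of $H_0$ in $G$ with $U\cdot x_0\subset B_{\rho/2}(x_0)$ and, after shrinking, with $U\cdot \overline{B_{\rho/2}}(x_0)\subset B_\rho(x_0)$; this last inclusion uses that elements of $H_0$ preserve balls about $x_0$, together with compactness of $H_0$. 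For a compact $H\subset U$ let $\mu$ be the push-forward under $h\mapsto h\cdot x_0$ of the normalised Haar measure of $H$. It is supported in $B_{\rho/2}(x_0)$, and the center of mass $c$, taken as the unique minimiser over $\overline{B_\rho}(x_0)$ of $p\mapsto\int_X d(p,q)^2\,d\mu(q)$, lies in the convex hull of the support, hence in $\overline{B_{\rho/2}}(x_0)$.

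Then I would verify that $c$ is $H$-fixed. For $h\in H$ the invariance of Haar measure gives $h_*\mu=\mu$, so the energy functional satisfies $\int d(p,q)^2\,d\mu=\int d(h^{-1}p,q)^2\,d\mu$; consequently $h\cdot c$ is the minimiser of the same functional over $h\cdot \overline{B_\rho}(x_0)$. By the choice of $U$ both $c$ and $h\cdot c$ lie in $B_\rho(x_0)$, and $h\cdot c$ is therefore a critical point of a strictly convex functional whose unique critical point in $B_\rho(x_0)$ is $c$; hence $h\cdot c=c$. Writing $c=gH_0$ yields $g^{-1}Hg\subset H_0$. For the final assertion, as $U$ is shrunk towards $H_0$ the orbit collapses to $x_0$ and so $c\to x_0$; composing with a local section of the principal bundle $G\to G/H_0$ sending $x_0$ to $e$ then shows that $g$ may be taken in any prescribed neighbourhood of the identity.

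The main obstacle I anticipate is the center-of-mass step, and specifically arranging the geometry so that the Karcher mean is both well defined and $H$-equivariant. The difficulty is that $H$ does \emph{not} preserve the ball $B_\rho(x_0)$, since elements of $H_0$ fix $x_0$ yet act nontrivially on $T_{x_0}X$, so one cannot simply invoke a fixed-point theorem on an $H$-invariant convex set. The two-radius device above — confining the orbit to $B_{\rho/2}$ while minimising over the larger convex ball $B_\rho$, and exploiting that near-$H_0$ isometries map $\overline{B_{\rho/2}}$ into $B_\rho$ — is what makes the strict-convexity argument for $h\cdot c=c$ go through, and choosing these radii and the neighbourhood $U$ uniformly over all compact subgroups contained in $U$ is the delicate point.
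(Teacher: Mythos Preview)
The paper does not prove this theorem. It is quoted as a classical external result, with attribution to Montgomery and Zippin's 1942 paper, and used as a black box in the proof of Theorem~\ref{theo.rig}. There is therefore nothing in the paper to compare your argument against.

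That said, your approach is a legitimate and essentially correct modern proof of the statement. The translation into a fixed-point problem on $G/H_0$, the existence of a $G$-invariant metric from compactness of the isotropy group $H_0$, and the Karcher center-of-mass construction are all standard and fit together as you describe. Your two-radius device (orbit confined to $B_{\rho/2}$, minimisation over the convex $B_\rho$, and $U$ chosen so that $U\cdot\overline{B_{\rho/2}}\subset B_\rho$) correctly handles the fact that $H$ need not preserve the ball, and the strict convexity argument for $h\cdot c=c$ is sound once $\rho$ is taken small relative to the convexity radius and an upper sectional-curvature bound on a neighbourhood of~$x_0$. The original 1942 argument of Montgomery and Zippin predates the Riemannian center-of-mass technology and proceeds instead via structural properties of Lie groups; your route is more geometric and arguably more transparent, at the cost of importing the Karcher machinery.
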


\begin{proof}[Proof of Theorem~\ref{theo.rig}]
The group epimorphisms 
$$\alpha_t:\Gamma=\pi_1(M)\to\Hol(M,g_t)/\Hol_0(M,g_t)$$ 
factor through 
$\Gamma\to \ol\Gamma=\Gamma/\Z^q$ to epimorphisms 
$\bar\alpha_t:\ol\Gamma\to\Hol(M,g_t)/\Hol_0(M,g_t)$. 
Choose loops $\gamma_1,\ldots,\gamma_r:[0,1]\to M$ where $\gamma_i(0)=\gamma_i(1)$ is the base point of $M$ and such that 
$\ol\Gamma=\{[\gamma_1],\ldots,[\gamma_\ell]\}$, $\ell=\#\ol\Gamma$. Let $A_{t,i}$ be the parallel 
transport along $\gamma_i$ for the metric $g_t$.

Thus
\[
\Hol(M,g_t)= A_{t,1}\cdot\Hol_0(M,g_t)\cup\ldots \cup A_{t,\ell}\cdot\Hol_0(M,g_t). 
\]
Let $k(t):=\#\{i\in\{1,2,\ldots,\ell\}\mid A_{t,i}\in\Hol_0(M,g_t)\}=\#\ker \ol\alpha_t$. 
Then the index of $\Hol_0(M,g_t)$ in $\Hol(M,g_t)$ is $\ell/k(t)$ and this index 
is the number of connected components of $\Hol(M,g_t)$.
Thus the index of $\Hol_0(M,g_t)$ in $\Hol(M,g_t)$ is 
lower semi-continuous. We fix $t_0\in [0,T]$.
As $A_{t,i}$ depends continuously on $t$, 
the theorem of Montgomery-Zippin
implies that $\Hol(M,g_{t})$ is conjugate
to a subgroup $H_t$ of $\Hol(M,g_{t_0})$ if $t$ is sufficiently close to $t_0$.
Conjugation preserves the connected component containing the identity,   
thus $\Hol_0(M,g_{t_0})\subset H_t\subset \Hol(M,g_{t_0})$. 
Lower semi-continuity of the number of connected components of $H_t$ 
implies the result.
\end{proof}


\end{document}